\DeclareFontFamily{OT1}{wncyr}{\hyphenchar\font45
}
\DeclareFontShape{OT1}{wncyr}{m}{n}{%
   <5> <6> <7> <8> <9> gen * wncyr
   <10> <10.95> <12> <14.4> <17.28> <20.74>  <24.88>wncyr10}{}
\DeclareFontShape{OT1}{wncyr}{m}{it}{%
   <5> <6> <7> <8> <9> gen * wncyi
   <10> <10.95> <12> <14.4> <17.28> <20.74> <24.88> wncyi10}{}
\DeclareFontShape{OT1}{wncyr}{m}{sc}{%
   <5> <6> <7> <8> <9> <10> <10.95> <12> <14.4>
   <17.28> <20.74> <24.88>wncysc10}{}
\DeclareFontShape{OT1}{wncyr}{b}{n}{%
   <5> <6> <7> <8> <9> gen * wncyb
   <10> <10.95> <12> <14.4> <17.28> <20.74> <24.88>wncyb10}{}
\def\rus{\usefont{OT1}{wncyr}{m}{n}\cyracc\fontsize{8}{10pt}\selectfont}
\theoremstyle{plain}
\newtheorem{theorem}{Theorem}[section]
\newtheorem{lemma}[theorem]{Lemma}
\theoremstyle{remark}
\newtheorem{remark}[theorem]{Remark}
\theoremstyle{definition}
\newtheorem{example}[theorem]{\bf Example}
\newtheorem{nothing*}[theorem]{}
\begin{document}

\title[Two orbits: When is one
in the closure of the other? ]{Two orbits:\\
When is one in the closure of the other? }

\author[Vladimir L. Popov]{Vladimir L. Popov}
\address{Steklov Mathematical Institute,
Russian Academy of Sciences, Gubkina 8, Moscow\\
119991, Russia} \email{popovvl@orc.ru}

\thanks{Supported by Russian grants {\rus RFFI
08--01--00095}, {\rus N{SH}--1987.2008.1}, and a
granting program {\it Contemporary Problems of
Theoretical Mathematics} of the Mathe\-matics
Branch of the Russian Academy of Sciences.}

\dedicatory{To V. A. Iskovskikh on the occasion
of his $70$th birthday}

\begin{abstract}
Let $G$ be a connected linear algebraic group,
let $V$ be a finite dimensional algebraic
$G$-module, and let $\mathcal O_1$, $\mathcal
O_2$ be two $G$-orbits in $V$. We describe a
constructive way to find out whether  or not
$\mathcal O_1$ lies in the closure of~$\mathcal
O_2$.
\end{abstract}

\maketitle

\section{\bf Introduction}

\begin{nothing*}\label{begin}
Fix  an algebraically closed ground field $k$ of
arbitrary characteristic.

Let $G$ be a connected linear algebraic group
and let $V$ be a finite dimensional al\-gebraic
$G$-module. Consider two points $a$ and $b\in V$
and their $G$-orbits $G\cdot a$ and~$G\cdot b$.

The following problem continually arises in
algebraic transformation group theory and its
applications:
\begin{gather}
\hskip -18mm\begin{align*}%%{gather}
%%\begin{gathered}
&\mbox{\it How can one find out whether or not
the orbit
$G\cdot a$ lies in }\\[-3pt] &\mbox{\it
the closure in $V$ of
 the orbit
 $G\cdot b$}?
%%\tag{$\ast$}
%%\end{gathered}
%%\tag{$\ast$}
\end{align*}\tag{$\ast$}
\end{gather}
(here and further topological terms
are related to the Zarisky topology).

\begin{example}
If the group $G$ is reductive and $a=0$, then
Problem $(\ast)$ means finding out whether or
not  the point $b$ is unstable in the sense of
Geometric Invariant Theory
 \cite{M}. A description of the cone of unstable
 points is provided by the Hilbert--Mumford theory.
\end{example}

\begin{example} Let $G$ be a torus and let
$X(G)$ be the group of its characters in
additive notation. For every $\lambda\in X(G)$,
$g\in G$, and $v\in V$ denote by $g^\lambda$ and
$v_\lambda$ respectively the value of $\lambda$
at $g$ and the projection of $v$ to the
$\lambda$-wight subspace of the $G$-module $V$
parallel to the sum of the other weight
subspaces. Let ${\rm supp}\,v:=\{\lambda\in
X(G)\mid v_\lambda\neq 0\}$. Then by \cite{PV72}
Problem $(*)$ means finding out whether or not
the following conditions hold: (i) the cone
generated by ${\rm supp}\,a$ in
$X(G)\otimes_\mathbb Z\mathbb R$ is a face of
the cone generated by ${\rm supp}\,b$, and (ii)
there is an element $g\in G$ such that
$g^\lambda a_\lambda=b_\lambda$ for every
$\lambda\in {\rm supp}\,a$.
\end{example}

\begin{example} If the group $G$ is unipotent,
then every $G$-orbit is closed in $V$, see
\cite{R61}. Therefore Problem $(*)$ means
%%is the
%%problem of
finding out whether or not  the
%%$G$-orbits of
points $a$ and $b$
%%coincide or not.
%%the points $a$ и $b$
lie in
one and the same $G$-orbit.
\end{example}

\begin{example}\label{15}
Let ${\rm char}\,k=0$. Assume that $G$ is a
simple group, $V$ is its Lie algebra endowed
with the adjoint action of $G$, and the elements
$a$ and $b$ are nilpotent. If $G$ is a classical
group (i.e., of type ${\sf A}_l$, ${\sf B}_l$,
${\sf C}_l$, or ${\sf D}_l$), then the answer to
$(\ast)$ is given by the known rule formulated
in terms of the sizes of Jordan blocks of the
Jordan normal forms of
 $a$ and $b$, see, e.g.,\;\cite{CM}. If the
 group $G$ is exceptional (i.e., of type ${\sf E}_6$, ${\sf E}_7$, ${\sf
E}_8$, ${\sf F}_4$, or ${\sf G}_2$), then this
answer, obtained by means of ad hoc methods, is
given by the explicit Hasse diagrams of the set
of nilpotent orbits endowed with the Bruhat
order, i.e., partially ordered according to the
rule
\begin{equation*} \label{ne}
{\mathcal O}_1\leqslant {\mathcal O}_2\iff
{\mathcal O}_1\subseteq \overline{{\mathcal
O}_2}
\end{equation*}
(as usual, bar means the closure in $V$), see
\cite{S82}, \cite{C}.

\begin{example} Apart from the classical case of
orbits of a Borel subgroup of a reductive group $G$ on
the generalized flag variety $G/P$, and also the case
of Example \ref{15}, the Hasse diagrams of the sets of
orbits endowed with the Bruhat order are found
utilizing the ad hoc methods in some other special
cases, see, e.g., \cite{K90}, \cite{Pe04},
\cite{BHRZ99}, \cite{GHR07}, \cite{MWZ99}, and
Examples \ref{LL} and \ref{M} below. On the other
hand, in a number of cases the orbits are classified,
but the Hasse diagrams are not found: for instance,
this is so for nilpotent $3$-vectors of the
$n$-dimensional spaces where $n\leqslant 9$, for
$4$-vectors of an $8$-dimensional space, for spinors
of the $m$-dimensional spaces where $m\leqslant 14$
and $16$, see the relevant references in \cite{PV94}.
\end{example}
\end{example}
\begin{example}\label{LL}
Let $L$ be a finite dimensional vector space over $k$.
Let  $G={\rm GL}(L)$ and $V=L^*\otimes L^*\otimes L$.
Points of $V$ are the structures of  (not necessarily
associative) $k$-algebras on the vector space $L$. The
algebras defined by the struc\-tu\-res
 $a$ and $b$ are isomorphic if and only if
$G\cdot a=G\cdot b$. In the language of the
theory of algebras Problem $(\ast)$ is
formulated as follows: How can one find out
whether or not  the algebra defined by the
structure $a$ is a {\it degeneration} of the
algebra defined by the structure $b$? In general
case it is considered to be a difficult problem.
There is a number of papers where a
classification of degenerations in various
special cases is obtained by means of  ad hoc
methods, see, e.g., \cite{B05}, \cite{BS},
\cite{S90}, and survey \cite[Chap.\,7]{OVG}.
\end{example}

\begin{example}\label{M} Consider the conjugation action
of the group $G={\rm GL}_d(k)$ on
 ${\rm Mat}_{d, d}(k)$. Consider
a finite dimensional associative $k$-algebra $A$ and a
$d$-dimensional vector space $L$ over $k$. If a basis
in $A$ and a basis in $L$ are fixed, then the set of
structures of left $A$-modules on $L$ is naturally
identified with a closed invariant subset ${\rm
Mod}_A^d$ of the direct sum of $\dim_k\!A$ copies of
the $G$-module ${\rm Mat}_{d, d}(k)$. Denote by $M_x$
the $A$-module corresponding to a point $x\in {\rm
Mod}_A^d$. Then the $A$-modules $M_a$ and $M_b$ are
isomorphic if and only if
 $G\cdot a=G\cdot b$, and in
this theory
 the condition $G\cdot
a\subseteq\overline{G\cdot b}$ is expressed  by
saying that  $M_a$ is a {\it degeneration} of
$M_b$. If $A$ is the path algebra of a quiver
obtained by fixing an orientation of the
extended Dynkin graph of a root system of type
 ${\sf A}_l$, ${\sf D}_l$, ${\sf E}_6$, ${\sf
E}_7$, or ${\sf E}_8$, then in \cite{B95} a
characterization of the degeneration relation in
terms of $A$-module structures of
 $M_a$ and $M_b$ and an algorithm that
 finds out whether or not
$M_a$ is the degeneration of $M_b$ are obtained.
\end{example}

\begin{example} \label{complexity} In case
of the natural action of the group ${\rm
GL}_n(k)$ on the space of $n$-ary forms of
degree $d$
%%in $n$ variables
with the coefficients in $k$ Problem $(\ast)$
(under the name {\it The orbit closure problem})
%%plays the key role
is of fundamental importance in application of
geometric invariant theory
 to
 %%several fundamental
%%problems of
complexity theory,
%%such as the lower
%%bound problems for the formula and circuit size,
%%is one of the key ones in application of methods
%%of invariant theory
%%methods
%%to
%%geometric complexity theory,
see \cite{MS01}.
\end{example}
\end{nothing*}

\begin{nothing*} In this paper we give a
constructive solution to Problem $(*)$: in Section 2
we suggest an algorithm that provides an answer to
$(*)$ by means of a finite number of effectively
feasible operations. Namely, we explicitly point out a
finite system of linear equations in finitely many
variables over the field $k$ such that the inclusion
 $G\cdot a\subseteq \overline{G\cdot b}$
 is equivalent to its inconsistency
  (the precise formulation is contained in Theorem
   \ref{thm}). By Kronecker--Capelli theorem
this reduces answering $(*)$ to comparing ranks of two
explicitly given matrices with the coefficients in $k$
that can be executed constructively. Of course, the
existence of a constructive solution to Problem $(*)$
immediately leads to the problem of finding a most
effective algorithm. But this is another problem that
we do not consider here.
\end{nothing*}
\begin{nothing*}\label{L} In Section 3
we suggest another algorithm for finding an
answer to Problem $(\ast)$. It is less effective
than the algorithm from Section 2, but it
provides more information and concerns a more
general problem. To wit, let
 $L$ be a linear subvariety of $V$.
 We show how one can constructively find
 a finite
 system of polynomial functions
 $q_1,\ldots, q_m$ on
$V$ such that
\begin{equation}\label{qqq}
\overline{G \cdot  L}= \{x\in V\mid
q_1(x)=\ldots= q_m(x)=0\}.
\end{equation}
For $L=b$ this provides the following
constructive answer to Problem $(\ast)$:
\begin{equation*}
G \cdot  a\subseteq \overline{G \cdot  b} \iff
q_1(a)=\ldots=q_m(a)=0.
\end{equation*}

Note that varieties of the form $\overline{G\cdot
L}$ are ubiquitous in algebraic transforma\-tion
group theory: apart from orbit closures, to them
also belong irreducible components of Hilbert
null-cones and, more generally, closures of
Hesselink strata \cite{Po03}, closures of sheets
\cite{PV94}, and closures of Jordan  (a.k.a.
decom\-po\-si\-tion) classes \cite{TY05}. Also
note that if a system of polynomials
$q_1,\ldots\break\ldots, q_m$ satisfying
\eqref{qqq} is given, modern commutative algebra
provides algorithms to constructively find a
system of generators of the ideal of all
polynomials vanishing on $\overline{G \cdot L}$,
see, e.g.,\,\cite[Chap.\,4, \S2]{CLO98}. In
particular, this provides methods to
construc\-ti\-vely find generators of the ideal
of polynomials vanishing on the closure of orbit.
In some special cases (for instance, for
nilpotent orbits of the adjoint action of the
group
 ${\rm SL}_n(k)$
and for ``rank varieties'') such generators have
been found, see \cite{W89}.
\end{nothing*}

\begin{nothing*} In essence, both algorithms are based
on the possibility to rationally para\-met\-rize
an open subset of $G$ by means of a variety of
the form
\begin{equation*}
\mathbb A^{r, s}:=\{(\varepsilon_1,\ldots,
\varepsilon_{r+s})\in \mathbb A^{r+s}\mid
\varepsilon_1\cdots\varepsilon_r\neq 0\},\qquad
r, s\in \mathbb N
\end{equation*}
(we denote by $\mathbb N$ the set of all
nonnegative integers), more precisely, on the
existence of a dominant morphism
\begin{equation}\label{iota}
\iota\colon \mathbb A^{r, s} \rightarrow G.
\end{equation}
\end{nothing*}
\begin{nothing*} As every normal quasiprojective
variety endowed with an algebraic ac\-ti\-on of
$G$ can be  equivariantly embedded in a
projective space  \cite{PV94}, in algebraic
transformation group theory and its applications
continually arises the problem analogous to
$(\ast)$, but for an action of $G$ on a
projective space (in fact, this is so in Example
\ref{complexity}). However, this problem is
reduced to Problem $(\ast)$ for actions on
vector spaces, see Subsection \ref{reduction}.
\end{nothing*}

\begin{nothing*} We close this introduction
by noting that as
$$
G\cdot a =G\cdot b\iff G\cdot a\subseteq
\overline{G\cdot b}\hskip 2mm \mbox{and}\hskip 2mm
G\cdot b\subseteq \overline{G\cdot a},
$$
a constructive solution to Problem $(*)$ provides a
constructive solution to the following problem:
%%\begin{gather*}
%%\begin{gathered}
\begin{align*}&\mbox{\it How can
one find out whether or not  two given points
of~$\;V$ lie }\\[-3pt] &\mbox{\it  in one  and
the same $G$-orbit}? \end{align*}
%%\end{gathered}
%%\end{gather*}

This means that  our result yields a constructive
solution to the classification problem for some types
of mathematical objects: for instance, for
$k$-algebras of a fixed dimension up to isomorphisms
(see Example \ref{LL}); for $A$-modules of a fixed
dimension over a fixed $k$-algebra $A$ up to
isomorphism (see Example \ref{M}); for
$k$-representations of a fixed dimension of a given
quiver; for some types of algebraic varieties (see
Example \ref{A}).
\end{nothing*}

\begin{example}\label{A} Let $f_1$ and $f_2$ be two irreducible
forms of the same degree in the homogeneous
coordinates of the projective space $\mathbb P^n$.
Assume that, for every $i=1, 2$, the hypersurface
$H_i$ in $\mathbb P^n$ defined by the equation $f_i=0$
is smooth. Let $n\geqslant 4$. Then by a theorem of
Severy--Lefschetz--Andreotti every positive divisor on
the hypersurface $H_i$ is cut out by a hypersurface in
$\mathbb P^n$, see \cite[Theorem 2]{MM64}. It is not
difficult to deduce from this that the algebraic
varieties $H_1$ and $H_2$ are isomorphic if and only
if $H_1$ is the image of $H_2$ under a projective
transformation of $\mathbb P^n$, i.e., if and only if
$f_1$ is in the ${\rm GL}_{n+1}$-orbit of $f_2$.
\end{example}
\begin{remark}\label{AA} There are the
other types of algebraic varieties for which the
isomorphism problem is reduced to finding out whether
or not some forms lie in one and the same orbit of the
corresponding linear algebraic group. For instance,
smooth projective curves of a genus $g\geqslant 2$ are
embedded into $\mathbb P^{5g-6}$ be means of the
tripled canonical class, and two curves are isomorphic
if and only if the image of one of them is transformed
to the image of the other by a projective
transformation of $\mathbb P^{5g-6}$. In turn, the
latter condition is equivalent to the property that
the Chow forms (a.k.a. Cayley forms) of these images
lie in one and the same orbit of the corresponding
linear algebraic group.
\end{remark}

\section{\bf Main result}

\begin{nothing*} Our further considerations
are based on the following fact.
\begin{lemma}\label{domi}  For some
$r, s\in\mathbb N$, there is a dominant morphism
\eqref{iota}. Moreover, for $r={\rm rk}\, G$,
there is  an open embedding \eqref{iota}.
\end{lemma}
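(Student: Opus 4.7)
The plan is to prove the stronger open-embedding assertion directly, as it immediately implies the dominance of $\iota$. The key idea is to combine a Bruhat-type big cell in the reductive quotient $\overline{G} := G/R_u(G)$ with a variety-theoretic trivialization of the unipotent fibration $\pi\colon G \to \overline{G}$.

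First I would fix a maximal torus $T \subset G$ of dimension $r = {\rm rk}\, G$. Because $T \cap R_u(G)$ is simultaneously diagonalizable and unipotent, it is trivial, so $T$ maps isomorphically to a maximal torus $\overline{T}$ of the connected reductive group $\overline{G}$; in particular ${\rm rk}\,\overline{G} = r$. I would then choose a Borel $\overline{B} \supset \overline{T}$ in $\overline{G}$, its opposite Borel $\overline{B}^-$, and write $\overline{U}, \overline{U}^-$ for their unipotent radicals. Bruhat decomposition, valid in arbitrary characteristic for connected reductive groups, provides an open immersion
\[
\overline{U}^- \times \overline{T} \times \overline{U} \hookrightarrow \overline{G}, \qquad (u^-, t, u) \mapsto u^- t u,
\]
onto an open affine subvariety $\overline{\Omega} \subset \overline{G}$.

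Since $\overline{U}$ and $\overline{U}^-$ are connected unipotent groups over $k$, each is isomorphic as a variety to an affine space (via the composition series by $\mathbb{G}_a$-quotients, which exists in any characteristic over an algebraically closed field); together with $\overline{T} \cong \mathbb{G}_m^r$ this identifies $\overline{\Omega}$ with $\mathbb{A}^{r,\overline{s}}$ where $\overline{s} := \dim \overline{U} + \dim \overline{U}^-$. Next I would lift this to $G$. The morphism $\pi$ is a principal $R_u(G)$-bundle; because $R_u(G)$ admits a composition series with successive quotients isomorphic to $\mathbb{G}_a$, iterated use of the exact sequence of pointed sets in non-abelian cohomology, together with the vanishing $H^1(\overline{\Omega}, \mathbb{G}_a) = 0$ (which holds because $\overline{\Omega}$ is affine), shows that every principal $R_u(G)$-bundle over $\overline{\Omega}$ is trivial. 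Hence $\pi^{-1}(\overline{\Omega}) \cong \overline{\Omega} \times R_u(G) \cong \mathbb{A}^{r,s}$ for $s := \overline{s} + \dim R_u(G)$, and $\pi^{-1}(\overline{\Omega})$ is open in $G$; the resulting isomorphism composed with inclusion into $G$ is the desired open embedding.

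The main obstacle I anticipate is handling positive characteristic, where the Levi decomposition $G = L \ltimes R_u(G)$ may fail and one cannot simply realize $G$ as a product of a reductive and a unipotent factor. The argument above sidesteps this by using only the variety-theoretic structure of connected unipotent groups and the triviality of principal unipotent bundles over affine varieties, both of which hold unconditionally over an algebraically closed field.
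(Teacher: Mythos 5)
Your argument is correct and is essentially the paper's own proof: both rest on the big Bruhat cell of the reductive quotient $G/R_u(G)$ (giving the $\mathbb{A}^{r,\bar s}$ factor with $r={\rm rk}\,G$), the triviality of a torsor under a connected unipotent group over an affine base, and the variety isomorphism of a connected unipotent group with an affine space. The only minor difference is that you trivialize the $R_u(G)$-torsor just over the big cell, re-deriving its triviality by d\'evissage through a $\mathbb{G}_a$-filtration, whereas the paper trivializes it over all of $G/R_u(G)$ by citing Rosenlicht's Zariski-local triviality together with Grothendieck's theorem.
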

\begin{proof}
Let $R_u(G)$ be the unipotent radical of the
group  $G$. By \cite{R56}, as $R_u(G)$ is a
connected solvable group, the canonical
projection $G\to G/R_u(G)$ is a torsor with the
base $G/R_u(G)$ and the structural group
$R_u(G)$ that is locally trivial in the Zariski
topology. By \cite{G58}, as the base is affine
and the structural group is connected and
unipotent, this torsor is trivial. Hence the
variety $G$ is isomorphic to the product of
varieties $R_u(G)$ and $G/R_u(G)$. As $R_u(G)$
is a connected unipotent group, the first of
them is isomorphic to  $\mathbb
A^{\dim\,R_u(G)}$, see \cite{G58}. On the other
hand, the big Bruhat cell of the reductive group
$G/R_u(G)$ is isomophic to $\mathbb A^{r, \dim
G/R_u(G)-r}$, where $r={\rm rk}\, G/R_u(G)= {\rm
rk}\,G$, see \cite{Spr98}. As the variety
$\mathbb A^{r, \dim G-r}$ is isomorphic to
$\mathbb A^{\dim\,R_u(G)}\times \mathbb A^{r,
\dim G/R_u(G)-r}$, this shows that there is its
open embedding in~$G$.
\end{proof}
\end{nothing*}

\begin{nothing*} \label{o}{\it Notation}

\vskip 1mm

Fix a basis $e_1,\ldots, e_{n}$ in $V$. As the
case $n=1$ is clear, in the further we assume
that $n>1$. There are functions $\rho_{i, j}$,
$1\leqslant i, j\leqslant n$, regular on
 $G$ such that the action of $G$
on $V$ is given by the matrix representation
\begin{equation}\label{matr}
\rho\colon G\to {\rm Mat}_{n, n}(k),\hskip
3mm\rho(g)=
\begin{bmatrix}\rho_{1,1}(g)&\cdots&\rho_{1,n}(g)\\
\hdotsfor[2]{3}\\
\rho_{n,1}(g)&\cdots&\rho_{n,n}(g)
\end{bmatrix},\hskip 2mm g\in G,
\end{equation}
i.e., $\rho (g)$ is the matrix of the linear
$V\to V$, $v\mapsto g\cdot v$ in the basis
$e_1,\ldots,e_{n}$, so that
\begin{equation}\label{actionn} g\hskip -.3mm\cdot\hskip
-.3mm \Bigl(\sum_{i=1}^{n}
\gamma_ie_i\Bigr)=\sum_{i=1}^{n}\Bigl(\sum_{j=1}^{n}\rho_{i,
j}(g)\gamma_j\Bigr)e_i\hskip 3mm\mbox{для любых
$g\in G$ и $\gamma_1,\ldots, \gamma_{n}\in k$}.
\end{equation}

 Fix a dominant morphism
\eqref{iota}: this is possible by Lemma
\ref{domi}. Denote by  $x_1,\ldots, x_{r+s}$ the
standard coordinate functions on
 $\mathbb A^{r, s}$:
\begin{gather}\label{coord}
x_i(a)=\varepsilon_i\hskip 2mm \mbox{for
$a=(\varepsilon_1,\ldots,\varepsilon_{r+s})\in
\mathbb A^{r, s}$.}
\end{gather}
As $x_1,\ldots, x_{r+s},
x_1^{-1},\ldots,x_{r}^{-1}$
%%is a system of
%%generators of
generate the $k$-algebra $k[\mathbb A^{r, s}]$
of regular functions on $\mathbb A^{r, s}$ and
$x_1,\ldots, x_{r+s}$ are algebraically
independent over $k$, all the monomials of the
form
\begin{equation}\label{monom}
x^{i_1}_{1}\cdots x^{i_{r+s}}_{r+s}, \hskip
3mm\mbox{где \hskip 2mm $i_1,\ldots, i_{r+s}\in
\mathbb Z \hskip 1.5mm \mbox{и}\hskip 1.8mm
i_{r+1},\ldots, i_{r+s}\in \mathbb N$,}
\end{equation}
constitute a basis of the vector space
$k[\mathbb A^{r, s}]$ over $k$.
\end{nothing*}

\begin{nothing*} {\it The degree of
the variety $\rho(G)$}

\vskip 1mm

Recall \cite{M76} that the degree of a locally
closed subset  $Y$ of $\mathbb A^{l}$ is the
cardinality  ${\rm deg}\,Y$ of the intersection
of $Y$ with an $(l-\dim\,Y)$-dimensional linear
subvariety of $\mathbb A^l$ in general position.
For us, the degree ${\rm deg}\,\rho(G)$ of the
subvariety $\rho(G)$ of the space of matrices
${\rm Mat}_{n, n}(k)$ is of the special
interest. In the important case where ${\rm
char}\,k=0$ and $G$ is a reductive group there
is the following formula for computation of this
number.

Fix a maximal torus $T$ in $G$. Let $X(T)$ be
its character group in additive notation. The
latter is a free abelian group of rank $r=\dim
T={\rm rk}\,G$ naturally embedded in the real
vector space $E=X(T)\otimes_{\mathbb Z}\mathbb
R$. Fixing a basis in $X(T)$, we fix an
isomorphism between $E$ and the coordinate space
$\mathbb R^r$. Identify these spaces by means of
this isomorphism. Then the group $X(T)$ is
identified with the lattice $\mathbb Z^r$ in
$\mathbb R^r$. The Weyl group $W:=N_G(T)/T$
naturally acts on $E$. We denote by $d\nu$ the
standard volume form on $E$. Let $\mathcal P_V$
be the convex hull in $E$ of the union of zero
and the system of weights of the $T$-module $V$.
Replacing $G$ by the quotient group of $G$ by
the unity connected component of the kernel of
the $G$-action on $V$, we may (and shall) assume
that this kernel is finite and hence $\dim
\mathcal P_V=r$. Fix a system $R_+$ of positive
roots of the root system of $G$ with respect to
$T$. For every root $\alpha\in R_+$, denote by
$\alpha\!^{\vee}$ the corresponding coroot,
i.e., the linear form on $E$ defined by the
formula $\alpha\!^{\vee}\colon E\to \mathbb R,
\hskip 3mm \alpha\!^{\vee}\,(v)=2\langle
\alpha\,\vert\, v\rangle/\langle \alpha\,\vert\,
\alpha\rangle, $ where $\langle\hskip 1.8mm
\mid\hskip 1.8mm \rangle$ is a $W$-invariant
inner product on $E$, see \cite{B68}. Let
$m_1+1, \ldots, m_r+1$ be the set of degrees of
homogeneous free generators of the algebra of
$W$-invariant polynomial functions on the
space
$E$, i.e., $m_1,\ldots, m_r$ are the
exponents of  $W$, see \cite{B68}.

\begin{theorem}[{{\rm Kazarnovski\v\i~\cite{K87}}}]
Let ${\rm char}\,k=0$ and let $\rho$ be a
represen\-ta\-tion of a connected reductive
group
 $G$ with finite kernel. Then
\begin{equation}\label{degree}
{\rm deg}\,\rho(G):=\frac{\dim G\hskip
.25mm!}{|W|(m_1!\cdots m_r!)^2|\!\ker
\rho|}\int_{\mathcal P_V}\prod_{\alpha\in
R_+}(\alpha\!^{\vee}\,)^2d\nu.
\end{equation}
\end{theorem}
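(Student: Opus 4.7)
The plan is to realize $\deg\rho(G)$ as the leading term of a Hilbert function and then evaluate that term via the Weyl dimension formula and a Riemann-sum argument on lattice points in $\mathcal{P}_V$. First I would observe that the affine degree of $\rho(G)\subset{\rm Mat}_{n,n}(k)$ agrees with the projective degree of its closure in $\mathbb{P}^{n^2}$, and the latter equals $(\dim G)!$ times the leading coefficient of the Hilbert function
\begin{equation*}
H(t):=\dim_k\{f\in k[\rho(G)]:\deg f\le t\},
\end{equation*}
where $\deg f$ is measured in the matrix entries $\rho_{i,j}$. Everything thus reduces to the asymptotic of $H(t)$ as $t\to\infty$.

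Since each $\rho_{i,j}$ is a matrix coefficient of the $G$-module $V$, the space of their products of total degree $\le t$ is a $G\times G$-invariant subspace of $k[G]^{\ker\rho}=k[\rho(G)]$; because $\mathcal{P}_V$ is the convex hull of $0$ and the weights of $V$, the Peter--Weyl decomposition identifies it, asymptotically in $t$, with $\bigoplus_\lambda V_\lambda\otimes V_\lambda^*$, where $\lambda$ runs over dominant weights in $t\mathcal{P}_V$ that are trivial on $\ker\rho$. Being finite, $\ker\rho$ is central in the connected group $G$ and hence lies in $T$; the weights trivial on $\ker\rho$ form a sublattice of $X(T)$ of index $|\ker\rho|$. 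Therefore
\begin{equation*}
H(t) \;\sim\; \frac{1}{|\ker\rho|}\sum_{\lambda\in t\mathcal{P}_V\cap X(T)_+}(\dim V_\lambda)^2.
\end{equation*}

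Next I apply Weyl's dimension formula $\dim V_\lambda=\prod_{\alpha\in R_+}\alpha^\vee(\lambda+\rho_W)/\alpha^\vee(\rho_W)$ together with the classical identity $\prod_{\alpha\in R_+}\alpha^\vee(\rho_W)=m_1!\cdots m_r!$ coming from the exponents of $W$. For large $\lambda$ this gives $(\dim V_\lambda)^2\approx\prod_\alpha(\alpha^\vee(\lambda))^2/(m_1!\cdots m_r!)^2$, a function homogeneous of degree $2|R_+|$ in $\lambda$. Since the summation is over lattice points in the $r$-dimensional dilating region $t\mathcal{P}_V\cap\overline{C}$ (with $\overline{C}$ the closed Weyl chamber), Riemann approximation combined with the $W$-invariance of both $\mathcal{P}_V$ and the integrand produces
\begin{equation*}
\sum_{\lambda\in t\mathcal{P}_V\cap X(T)_+}(\dim V_\lambda)^2 \;\sim\; \frac{t^{\dim G}}{|W|(m_1!\cdots m_r!)^2}\int_{\mathcal{P}_V}\prod_{\alpha\in R_+}(\alpha^\vee)^2\,d\nu,
\end{equation*}
using $r+2|R_+|=\dim G$. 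Multiplying the leading coefficient of $H(t)$ by $(\dim G)!$ and incorporating the factor $1/|\ker\rho|$ produces the claimed formula.

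The hard part is the Riemann-sum step: one must verify that the lower-order corrections in Weyl's formula (the $\rho_W$-shift, and especially the behaviour of $\dim V_\lambda$ near walls of $\overline{C}$ where some $\alpha^\vee(\lambda)$ vanish) do not alter the leading asymptotic, and one must fix the normalization of $d\nu$ so that the lattice $X(T)\subset E$ has covolume $1$. Both points are standard in equivariant intersection theory but require care; combined with the identification of $|\ker\rho|$ as the relevant lattice index, they complete the proof.
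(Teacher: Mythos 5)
The paper itself does not prove this statement: it is quoted from Kazarnovski\v\i\ \cite{K87} and used only as a black box (all the algorithm needs is the value $d=\deg\rho(G)$). So there is no in-paper proof to compare yours with; what follows is an assessment of your outline on its own terms.

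Your route --- affine degree of $\rho(G)$ as $(\dim G)!$ times the leading coefficient of the filtration of $k[\rho(G)]$ by degree in the matrix entries, Peter--Weyl to turn that filtration into $\sum_\lambda(\dim V_\lambda)^2$ over the highest weights occurring in $\bigoplus_{s\leqslant t}V^{\otimes s}$, then the Weyl dimension formula, the identity $\prod_{\alpha\in R_+}\alpha^\vee(\rho_W)=m_1!\cdots m_r!$ (your $\rho_W$ being the Weyl vector), and a lattice-point asymptotic over the dilated polytope $t\mathcal P_V$ --- is the standard way to prove this theorem, and your constants come out right: $r+2|R_+|=\dim G$, the factor $|W|$ from unfolding the dominant chamber, and covolume $1$ for $X(T)\cong\mathbb Z^r$, which matches the paper's normalization of $d\nu$. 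The substantive gap is the step you phrase as ``the Peter--Weyl decomposition identifies it, asymptotically in $t$, with $\bigoplus_\lambda V_\lambda\otimes V_\lambda^*$, $\lambda$ dominant in $t\mathcal P_V$ and trivial on $\ker\rho$.'' The easy half is that every highest weight of $V^{\otimes s}$, $s\leqslant t$, lies in $t\mathcal P_V$ (using $0\in\mathcal P_V$) and in the sublattice $\Lambda$ of $X(T)$ generated by the weights of $V$. What you do not argue is the converse, and that is the heart of the proof: (i) $\Lambda$ coincides with the group of characters trivial on $\ker\rho$ and has index exactly $|\ker\rho|$ in $X(T)$ --- this follows in characteristic $0$ from duality for the finite diagonalizable group $\ker\rho$, which is central (finite normal in connected $G$), hence lies in $T$ and is precisely the annihilator of $\Lambda$; and, much more seriously, (ii) apart from a boundary layer whose contribution, after weighting by $(\dim V_\lambda)^2$, is $o(t^{\dim G})$, every dominant $\lambda\in t\mathcal P_V\cap\Lambda$ really does occur as a highest weight of some $V^{\otimes s}$ with $s\leqslant t$. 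Point (ii) is a saturation statement about the semigroup $\{(s,\lambda)\mid V_\lambda\subseteq V^{\otimes s}\}$ and is not among the issues you flag at the end (the $\rho_W$-shift, behaviour near walls, normalization of $d\nu$), all of which are indeed routine once (ii) is established. With (i) and (ii) supplied, your argument gives the formula \eqref{degree}.
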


\begin{example}\label{sl2} Let ${\rm char}\,k=0$.
Consider the main object of pre-Hilbertian classical
invariant theory: $G={\rm SL}_2(k)$ and $V=V_h$ is the
space of binary forms of degree  $h$ in variables
$z_1, z_2$ over the field $k$ on which $G$ acts by
linear substitutions of variables:
\begin{equation}\label{sl}
g\cdot z_1=\alpha z_1+\gamma z_2,\hskip 2mm
g\cdot z_2=\beta z_1+\delta z_2, \hskip 3mm
\mbox{if
$g=\begin{bmatrix}\alpha&\beta\\
\gamma&\delta\end{bmatrix}\in G$.}
\end{equation}
In this case $\dim G=3$, $|W|=2$, $r=1$,
$m_1=1$, $E=\mathbb R$, $X(T)=\mathbb Z$. The
set $R_+$ consists of a single root $\alpha=2$
and $\alpha\!^{\vee}$ is the standard coordinate
function on $\mathbb R$, i.e.,
$\alpha\!^{\vee}(a)=a$ for every $a\in \mathbb
R$. Take the sequence of monomials
 $z_1^h, z_1^{h-1}z_2,\ldots,
 z_1z_2^{h-1}, z_2^h$ as
 a basis $e_1,\ldots, e_{h+1}$ of $V$. It follows from
\eqref{degree} that $e_{i+1}$ is a weight vector of
the diagonal torus $T=\{{\rm diag}(t, t^{-1})\mid t\in
k\setminus\{0\}\}$ with weight $t\mapsto t^{h-2i}$.
Therefore the weight system of the $T$-module $V$ is
the arithmetic progression  $\{h, h-2,\ldots, -h+2,
-h\}$. Hence $\mathcal P_{V_h}=[-h, h]$. The kernel of
the representation $\rho=\rho_h$ given by formula
\eqref{matr} is trivial if  $h$ odd and has order $2$
if $h$ is even. Therefore we deduce from
\eqref{degree} that
\begin{equation}\label{degsl2}
{\rm deg}\,\rho_h({\rm SL}_2)= \frac{3!}{2|\ker
\rho_h|}\int_{-h}^{h}x^2dx=
\begin{cases}
2h^3\quad &\mbox{if $h$ is odd},\\
h^3\quad& \mbox{if $h$ is even.}
\end{cases}
\end{equation}
\end{example}

%%$\cdot $ Как обычно, для последовательности
%%$i=(i_1,\ldots, i_n)\in \mathbb N^{n}$ и набора
%%переменных $y_1,\ldots, y_n$ мы полагаем
%%\begin{equation*}
%%y^i:=y^{i_1}_{1}\cdots y^{i_n}_{n}.
%%\end{equation*}

%%$\cdot $ Для любого $d\in \mathbb N$ обозначим
%%$\{\leqslant d\}_n$
%%\begin{equation*}
%%%%I_{\leqslant d}
%%\{\leqslant d\}_n:=\{(i_1,\ldots, i_n)\in \mathbb
%%N^{n}\mid i_1+\cdots+i_n\leqslant d\}.
%%\end{equation*}
%%%%\end{equation*}
\end{nothing*}

\begin{nothing*}\label{reduction} {\it
The reduction to conic case}

\vskip 1mm

Let $L$ be a finite dimensional vector space
over $k$. Let $H$ be an algebraic group
(algebraically) acting on a projective space
$\mathbb P(L)$ of one-dimensional linear
subspaces of $L$. Keeping $H$-orbits in $\mathbb
P(L)$, we may replace the group $H$ by its
quotient group by the kernel of action and
assume that $H$ is a subgroup of ${\rm
Aut}(\mathbb P(L))$. Let $\widetilde H$ be the
inverse image of  $H$ with respect to the
natural homomorphism  ${\rm GL}(L)\to {\rm
Aut}(\mathbb P(L))$ (note that $H$ is reductive
if and only if $\widetilde H$ shares this
property). Let $\pi\colon L\setminus \{0\}\to
\mathbb P(L)$ be the natural projection. We call
a subset in $L$ {\it conic} if it is stable with
respect to scalar multiplication by every
nonzero element $k$.
\begin{lemma}\label{pro} Let
$U$ be a nonempty open  $H$-stable subset of
$\mathbb P(L)$ and let $p, q\in U$ be two its
points. Take any points $\widetilde p\in
\pi^{-1}(p)$ and $\widetilde q\in \pi^{-1}(q)$.
Then the following properties are equivalent:
\begin{enumerate}
\item[(i)] the orbit $H\cdot p$ lies in the
closure of the orbit $H\cdot q$ in $\mathbb
P(L)$; \item[(ii)] the orbit $H\cdot p$ lies in
the closure of the orbit  $H\cdot q$ in
$U$;\item[(iii)] the orbit $\widetilde H\cdot
\widetilde p$ lies in the closure of the orbit
$\widetilde H \cdot \widetilde q$ in $L$.
\end{enumerate}
The orbits $\widetilde H\cdot \widetilde p$ and
$\widetilde H\cdot \widetilde q$ are conic.
\end{lemma}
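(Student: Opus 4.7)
The plan is to reduce all three conditions to a single statement in $L\setminus\{0\}$, exploiting the fact that $\pi\colon L\setminus\{0\}\to\mathbb P(L)$ is a principal $k^*$-bundle and that $\widetilde H$, being the preimage of $H$ under $\mathrm{GL}(L)\to\mathrm{Aut}(\mathbb P(L))$, automatically contains the kernel $k^*\!\cdot\!\mathrm{Id}$ of that homomorphism.

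First I would record two elementary observations that will be used throughout. Observation one: since $k^*\!\cdot\!\mathrm{Id}\subseteq\widetilde H$, every $\widetilde H$-orbit in $L$ is stable under scalar multiplication, hence conic; this proves the last assertion of the lemma. In particular $\pi^{-1}(H\cdot p)=\widetilde H\cdot\widetilde p$ and $\pi^{-1}(H\cdot q)=\widetilde H\cdot\widetilde q$, because $\widetilde H\to H$ is surjective and the scalar action produces the entire fiber of $\pi$ through $\widetilde p$ (resp.\ $\widetilde q$). Observation two: $\pi$ is a surjective open continuous map, and for any subset $X\subseteq\mathbb P(L)$ one has
\begin{equation*}
\pi^{-1}\bigl(\overline{X}^{\mathbb P(L)}\bigr)=\overline{\pi^{-1}(X)}^{L\setminus\{0\}};
\end{equation*}
the inclusion $\supseteq$ follows from continuity, while $\subseteq$ follows from openness of $\pi$ by the standard argument that any open neighborhood of a point $y\in\pi^{-1}(\overline X)$ maps to an open neighborhood of $\pi(y)$ which must meet $X$.

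For (i)$\Leftrightarrow$(ii), I would note that $H\cdot p\subseteq U$ since $p\in U$ and $U$ is $H$-stable, and that the closure of $H\cdot q$ in the open subset $U$ equals $\overline{H\cdot q}^{\mathbb P(L)}\cap U$; hence the two containments in (i) and (ii) are literally the same.

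For (i)$\Leftrightarrow$(iii), I would apply $\pi^{-1}$ to (i) and use the two observations above to rewrite it as $\widetilde H\cdot\widetilde p\subseteq\overline{\widetilde H\cdot\widetilde q}^{L\setminus\{0\}}$. Passing from the closure in $L\setminus\{0\}$ to the closure in $L$ can only add the point $0$; since $\widetilde H\cdot\widetilde p$ does not contain $0$ (as $\widetilde p\ne 0$), this extra point is irrelevant to the containment, and the condition becomes $\widetilde H\cdot\widetilde p\subseteq\overline{\widetilde H\cdot\widetilde q}^{L}$, which is (iii). The main potential obstacle is the compatibility of closures under $\pi^{-1}$, i.e.\ the openness argument in observation two, and the care needed to distinguish closures taken in $L\setminus\{0\}$ from those taken in $L$; once that bookkeeping is in place, the three equivalences are formal.
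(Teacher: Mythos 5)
Your proof is correct and follows essentially the same route as the paper: the paper's proof consists precisely of the observation that $\widetilde H$ contains all nonzero scalar multiplications, hence $\widetilde H\cdot\widetilde p=\pi^{-1}(H\cdot p)$ and $\widetilde H\cdot\widetilde q=\pi^{-1}(H\cdot q)$ and the orbits are conic, leaving the remaining (formal) verification to the reader. You have simply written out that verification --- the compatibility of $\pi^{-1}$ with closures via openness of $\pi$, the closure in $U$ versus $\mathbb P(L)$, and the harmless difference between closures in $L\setminus\{0\}$ and in $L$ --- correctly.
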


\begin{proof} As $\widetilde H$ contains all
scalar multiplications of the space $L$ by
nonzero scalars, $\widetilde H\cdot\widetilde
p=\pi^{-1}(H\cdot p)$ and $\widetilde
H\cdot\widetilde q=\pi^{-1}(H\cdot q)$. The
reader will easily check that the statement
follows from this and the definitions.
\end{proof}

We shall need the following application of Lemma
\ref{pro}. Let $L$ be the coordi\-na\-te space
 $k^{n+1}$ and let  $H$ be the group  $G$
from Subsectionиз \ref{begin} that acts on
$\mathbb P(L)$ according to the rule (see
\eqref{matr})
\begin{equation*}\label{action}
g\cdot (\alpha_0: \alpha_1:\ldots:\alpha_n):=
\biggl(\alpha_0:\sum_{i=1}^n\rho_{1,i}(g)
\alpha_i:\ldots:
\sum_{i=1}^n\rho_{n,i}(g)\alpha_i\biggr).
\end{equation*}
The standard principal open subset
$\{(\alpha_0:\alpha_1:\ldots:\alpha_n)\mid
\alpha_0\neq 0\}$ of $\mathbb P(L)$ is $G$-stable and
is equivariantly isomorphic to the $G$-module $V$.
Hence, by Lemma \ref{pro}, answering question $(*)$ is
equivalent to answering
%%question as to
whether or not  the orbit $\widetilde G\cdot
\widetilde a$ lies in the closure of the orbit
$\widetilde G\cdot \widetilde b$. This means
that replacing the group $G$ by $\widetilde G$,
the space $V$ by $L$, and the points $a$ and $b$
by, respectively, $\widetilde a$ and $\widetilde
b$, we reduce solving Problem  $(*)$ to the case
where both orbits  $G\cdot a$ and $G\cdot b$ are
nonzero and conic. Given this,
\begin{gather}\label{cone}
\begin{gathered}
%%\begin{align*}
%%{gather}
%%\label{cone}
%%\begin{gathered}
%%\begin{gather}
%%\mbox{
%%\begin{quote}
%%&
\mbox{\rm searching for an answer to Problem
$(*)$, we may assume that
}\\[-3pt] \mbox{\rm\hskip -37.5mm
$G\cdot a$ and $G\cdot b$ are nonzero conic
orbits}.
%%\end{gathered}
%%\end{align*}%%
\end{gathered}
\end{gather}
%%\end{quote}
%%}
%%\end{gather}

Note also that by Lemma \ref{pro} the problem
analogous to Problem  $(\ast)$, but for an
action on a projective space is reduced to
Problem  $(\ast)$ for an action on a linear
space.

\end{nothing*}

\begin{nothing*}
{\it The input of the algorithm}

\vskip 1mm

We assume that the following data are known (cf.\,\cite{Po81}):
\begin{list}{
} {\labelsep=2mm\leftmargin=8.5mm
\itemindent=-1mm\labelwidth=1mm\parsep=1mm}
\item[---] The degree of the variety $\rho(G)$,
\begin{equation}\label{d}
d:={\rm deg}\,\rho(G).
\end{equation}
%%Числа $\dim G$, $|W|$, $|\ker \rho|$ и
%%$m_1,\ldots, m_r$. \item[---] Многогранник
%%$\mathcal P_V$ в пространстве $E=\mathbb R^r$.
%% \item[---] Линейные формы
%% $\alpha\!^{\vee}\in E^*$ для всех положительных корней $\alpha\in
%% R_+$.
 \item[---] The functions $$\iota^*(\rho_{p, q})\!\in\!
 k[\mathbb A^{r, s}]\!=\!k[x_1,\ldots, x_{r+s},
x_1^{-1},\ldots,x_{r}^{-1}], \qquad 1\leqslant
p, q\leqslant n.$$
\end{list}%%{enumerate}
\end{nothing*}

\begin{example}\label{ex} Consider
the same situation as in Example \ref{sl2}. Number
\eqref{d} is given by formula \eqref{degsl2}. It
follows from \eqref{sl} that the functions
$\rho_{p,q}$ in \eqref{matr} are defined by the
equality
\begin{equation}\label{rh}
(\alpha z_1+\gamma z_2)^{h-j}(\beta z_1+\delta
z_2)^{j}\!=\!\sum_{i=0}^{h}\rho_{i+1,
j+1}(g)z^{h-i}_1z^{i}_2,\hskip 3mm \mbox{где
$g\!=\!\begin{bmatrix}\alpha&\beta\\
\gamma&\delta\end{bmatrix}\!\in\! G$}.
\end{equation}

Take $\iota$ to be the morphism
\begin{gather}
%%\begin{gathered}
\iota\colon \mathbb A^{1, 2}\hookrightarrow {\rm SL}_2(k),\notag\\
%%\hskip 3mm
(\varepsilon_1,
\varepsilon_2,\varepsilon_3)\mapsto
\begin{bmatrix}1&\varepsilon_2\\
0&1\end{bmatrix}
\begin{bmatrix}\varepsilon_1&0\\
0&\varepsilon_1^{-1}\end{bmatrix}
\begin{bmatrix}1&0\\
\varepsilon_3&1\end{bmatrix}=
\begin{bmatrix}
\varepsilon_1^{-1}\varepsilon_2\varepsilon_3+\varepsilon_1
&\varepsilon_1^{-1}\varepsilon_2\\
\varepsilon_1^{-1}\varepsilon_3&\varepsilon_1^{-1}\end{bmatrix}.\label{iosl}
%%\end{gathered}
\end{gather}
Then it follows from \eqref{coord}, \eqref{rh},
\eqref{iosl} that the function $\iota^*(\rho_{i+1,
j+1})$ is equal to the coefficient  $z^{h-i}_1z^{i}_2$
in the decomposition of the binary form
%%от $z_1, z_2$
\begin{equation*}
\bigl((x_1+x^{-1}_{1}x_2x_3)z_1+(x^{-1}_{1}x_3)
z_2\bigr)^{h-j}\bigl((x^{-1}_{1}x_2) z_1+(x^{-1}_{1})
z_2\bigr)^{j}
\end{equation*}
in the variables $z_1, z_2$ with the coefficients in
the field $k(x_1, x_2, x_3)$ as sum of monomials in
$z_1, z_2$. For instance, if $h=2$, then
$\iota^*(\rho_{2, 2})= 1+2x^{-2}_{1}x_2x_3$.
\end{example}

\begin{nothing*}\label{algor} {\it The algorithm}

\vskip 1mm

Now we turn to the formulation and proof of the
main result. We utilize the notation and
conventions introduced above and exclude the
trivial case $\overline{G \cdot  b}=V$, i.e.,
assume that
\begin{equation}\label{dim}
\dim\,G\cdot b<\dim\, V
\end{equation}
(as the number $\dim\,G\cdot b$ is equal to the
rank of the system of vectors $\{d\rho(Y_i)\cdot
b\}_{i\in I}$ where $\{Y_i\}_{i\in I}$ is a
basis of the vector space ${\rm Lie}(G)$,
condition \eqref{dim} can be verified
constructively if  the operators
 $d\rho(Y_i)$ are known).

The following sequence of steps together with
Theorem \ref{thm} provide a const\-ruc\-ti\-ve
method to answer Problem $(\ast)$:

\begin{list}{%%$\bullet$
}%%{enumerate}
{\labelsep=2mm\leftmargin=10mm
\itemindent=-1mm\labelwidth=4mm\parsep=1mm}
\item[(1)]%%[$\bullet$]
Find the coordinates of the vectors $a$ and $b$
in the basis $e_1,\ldots, e_n$:
\begin{equation*}a=\alpha_1 e_1+\ldots+\alpha_n
e_n,\hskip 3mm b=\beta_1e_1+\ldots+\beta_n
e_n,\end{equation*} and, changing the basis
$e_1,\ldots, e_n$  if necessary, achieve that
the following condition holds:
\begin{equation}\label{nonzero}
\beta_1\cdots\beta_n\neq 0.
\end{equation}
\item[(2)]%%[$\bullet$]
Consider $n$ ``generic'' polynomials
$F_1,\ldots, F_n$ of degree $2d-2$ (where $d$ is
defined by formula \eqref{d}) in the variables
$y_1,\ldots, y_n$,
\begin{align}\label{f}
F_p:=\sum_{\substack{q_1,\ldots, q_n\in\mathbb N\\
q_1+\cdots+q_n\leqslant 2d-2}} c_{p, q_1,\ldots,
q_n}y^{q_1}_1\cdots y^{q_n}_n, \hskip 3mm
p=1,\ldots, n,
\end{align}
i.e., such that, apart from  $y_1,\ldots, y_n$,
all the coefficients $c_{p, q_1,\ldots, q_n}$
are the indeterminates over $k$ as well, and put
\begin{equation}\label{F}
H(y_1,\ldots,
y_n):=(y_1-\alpha_1)F_1+\ldots+(y_n-\alpha_n)F_n-1.
\end{equation}
\item[(3)]%%[$\bullet$]
Replacing in $H(y_1,\ldots, y_n)$ every variable
$y_i$ by $\sum_{j=1}^n\beta_j\iota^*(\rho_{i,
j})$, obtain a linear combination of monomials
of form  \eqref{monom} with the coefficients in
the ring $k[\ldots, c_{p, q_1,\ldots,
q_n},\ldots]$ of polynomials in variables $c_{p,
q_1,\ldots, q_n}$ over the field $k$:

\begin{align}\label{subst}
H\biggl(\sum_{j=1}^n\beta_j\iota^*(\rho_{1,
j}),\ldots, \sum_{j=1}^n&\beta_j\iota^*(\rho_{n,
j})\biggr) \notag\\[- 6pt]
&=\sum_{(i_1,\ldots, i_{r+s})\in M}
\ell_{i_1,\ldots, i_{r+s}}x^{i_1}_{1}\cdots
x^{i_{r+s}}_{r+s},
\end{align}

\noindent where $\ell_{i_1,\ldots, i_{r+s}}\in
k[\ldots, c_{p, q_1,\ldots, q_n},\ldots]$ and
$M$ is a finite subset in  $\mathbb Z^r\times
\mathbb N^s$. By \eqref{f} and \eqref{F}, every
coefficient  $\ell_{i_1,\ldots, i_{r+s}}$ in
\eqref{subst} is a  {\it linear} function in the
variables  $c_{p, q_1,\ldots, q_n}$ with the
coefficients in the field $k$. \item[(4)]
Consider the following finite system of {\it
linear} equations in the variables\break $c_{p,
q_1,\ldots, q_n}$ with the coefficients in the
field $k$:
\begin{equation}\label{ls}
\ell_{i_1,\ldots, i_{r+s}}=0, \hskip
3mm\mbox{where $(i_1,\ldots, i_{r+s})\in M$.}
\end{equation}
\end{list}%%{enumerate}
\begin{theorem}\label{thm} Let
$G\cdot b$ be a nonzero conic orbit {\rm(}see
\eqref{cone}{\rm)}. The following properties are
equivalent:
\begin{enumerate}
\item[(i)]  the closure of the orbit $G\cdot b$
in $V$ contains the orbit  $G\cdot a$;
\item[(ii)] system of linear equations
\eqref{ls} is inconsistent.
\end{enumerate}
\end{theorem}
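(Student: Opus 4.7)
My plan is to reduce both implications to a single ``bridging equivalence'' and then to invoke an effective Nullstellensatz. Specifically, I aim to show that the linear system \eqref{ls} is solvable in the unknowns $c_{p,q_1,\ldots,q_n}$ if and only if there exist polynomials $F_1,\ldots,F_n\in k[y_1,\ldots,y_n]$ of total degree at most $2d-2$ for which the polynomial $H$ of \eqref{F} vanishes identically on $\overline{G\cdot b}$. The bridge is immediate from \eqref{actionn}: the $i$-th coordinate of $\iota(\varepsilon)\cdot b$ equals $\sum_{j=1}^n \beta_j\iota^*(\rho_{i,j})(\varepsilon)$, so step (3) rewrites the regular function $\varepsilon\mapsto H(\iota(\varepsilon)\cdot b)$ on $\mathbb A^{r,s}$ in the monomial basis \eqref{monom} of $k[\mathbb A^{r,s}]$; by uniqueness of the expansion, \eqref{ls} holds exactly when this function is identically zero. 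Since $\iota$ is dominant by Lemma \ref{domi}, that vanishing is in turn equivalent to $H\equiv 0$ on $G\cdot b$, hence, by continuity, on $\overline{G\cdot b}$.

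Granted the bridge, direction (i)$\Rightarrow$(ii) is a one-line contradiction: a solution to \eqref{ls} would force $H\equiv 0$ on $\overline{G\cdot b}\ni a$, whereas $H(a)=-1$ by \eqref{F}. Direction (ii)$\Rightarrow$(i), argued contrapositively, becomes: if $a\notin\overline{G\cdot b}$, then there exist $F_1,\ldots,F_n$ of degree $\leqslant 2d-2$ with $\sum_i(y_i-\alpha_i)F_i-1\in I(\overline{G\cdot b})$. The existence of \emph{some} such $F_i$, without any degree control, is the weak Nullstellensatz applied to the ideal $(y_1-\alpha_1,\ldots,y_n-\alpha_n)+I(\overline{G\cdot b})$, whose zero locus is $\{a\}\cap\overline{G\cdot b}=\emptyset$. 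The real task is to bound their degrees by $2d-2$.

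For this bound I would combine two ingredients. First, $\deg\overline{G\cdot b}\leqslant d$: the orbit $G\cdot b$ is the image of $\rho(G)\subseteq{\rm Mat}_{n,n}(k)$ under the linear map $A\mapsto Ab$, so its closure has degree at most $\deg\rho(G)=d$. Second, an effective Nullstellensatz specialized to \emph{linear} generators $y_i-\alpha_i$ on a \emph{conic} variety of degree at most $d$ (we are in the conic case by \eqref{cone}): via a B\'ezout-type intersection argument this specialization should yield exactly the clean bound $2d-2$. Once such $F_i$ are produced, their coefficients, read as values of the $c_{p,q_1,\ldots,q_n}$, give a solution of \eqref{ls} by the bridging equivalence.

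The main obstacle is precisely the quantitative step in the preceding paragraph. General-purpose effective Nullstellens\"atze (Koll\'ar, Brownawell, Jelonek) produce bounds of order $d^n$, far too weak here. The sharp bound $2d-2$ must be extracted by exploiting all three structural features simultaneously: linearity of the generators $y_i-\alpha_i$, homogeneity of $\overline{G\cdot b}$ (guaranteed by the reduction \eqref{cone}), and the degree inequality $\deg\overline{G\cdot b}\leqslant d$. Concretely, I would pass to the projective closure of $\overline{G\cdot b}$ in $\mathbb P^n$, analyze its intersection with a generic linear pencil through the projective image of $a$, and apply B\'ezout on that pencil to construct the required witnesses.
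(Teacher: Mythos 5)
Your bridging equivalence---system \eqref{ls} is solvable if and only if there exist $F_1,\ldots,F_n$ of degree $\leqslant 2d-2$ such that the polynomial $H$ of \eqref{F} vanishes identically on $\overline{G\cdot b}$---is correct and is exactly the paper's mechanism (injectivity of the comorphism of $\psi=\varphi\circ\iota$ coming from dominance of $\iota$, plus linear independence of the monomials \eqref{monom}); the same goes for the easy direction via $H(a)=-1$. But your argument is not complete: the entire content of the theorem is the degree bound $2d-2$, and there you stop at a plan (``pass to the projective closure, intersect with a generic linear pencil through the image of $a$, apply B\'ezout''). You construct no witnesses $F_i$, you do not address the intersection of the pencil with the locus at infinity of the projective closure, and you give no argument that a B\'ezout count on such a pencil yields an identity $\sum_i(y_i-\alpha_i)F_i\equiv 1$ on $\overline{G\cdot b}$ with the stated degree control, rather than merely some hypersurface of bounded degree separating $a$ from the orbit closure. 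So the quantitative step---which you yourself call the main obstacle---is left unproved.

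Moreover, the premise that forces you onto this unfinished route is mistaken: it is not true that the available effective Nullstellens\"atze only give bounds of order $d^n$ here. Jelonek's theorem \cite{J05} (Theorem \ref{Jel} in the paper) is intrinsic to a closed irreducible subvariety $X\subseteq\mathbb A^l$: for nonconstant regular functions $h_1,\ldots,h_m$ on $X$ without common zeros it produces $f_i$ with $\sum_i f_ih_i=1$ and ${\rm deg}\,f_ih_i\leqslant 2\,{\rm deg}\,X\cdot N'({\rm deg}\,h_1,\ldots,{\rm deg}\,h_m;\dim X)-1$ when $m>\dim X$. Take $X=\overline{G\cdot b}$ (positive-dimensional because the orbit is nonzero and conic, and of degree $\leqslant d$ by the very linear-image observation you make) and $h_i=t_i-\alpha_i$, the restrictions of $z_i-\alpha_i$; the normalization \eqref{nonzero} together with conicity makes the $t_i$ nonconstant, ${\rm deg}\,h_i=1$ gives $N'=1$, and \eqref{dim} puts us in the case $m=n>\dim X$, so ${\rm deg}\,(t_i-\alpha_i)f_i\leqslant 2d-1$; finally, conicity of $\overline{G\cdot b}$ gives the additivity \eqref{=}, hence ${\rm deg}\,f_i\leqslant 2d-2$. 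This is precisely how the paper closes the gap, so the bound is available off the shelf; as written, your proposal asserts it but does not prove it.
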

\begin{proof} We split it into several steps.
%%Мы разобьём его на несколько шагов.

1.\hskip 2mm Let $z_1,\ldots, z_n$ be the basis
of $V^*$ dual to $e_1,\ldots, e_n$. Let $t_i$ be
the restriction to $\overline{G\cdot b}$ of the
function  $z_i$. As the set $\overline{G\cdot
b}$ is closed in $V$ and $k[V]=k[z_1,\ldots,
z_n]$, we have
\begin{equation}\label{tt}
k[\overline{G \cdot b}]=k[t_1,\ldots, t_n].
\end{equation}

Note that the function $t_i$ is not a constant.
Indeed, as the orbit $G\cdot b$ is conic, $0\in
\overline{G \cdot b}$. The definition of $t_i$
implies that $t_i(0)=0$ and $t_i(b)=\beta_i$.
But $\beta_i\neq 0$ because of \eqref{nonzero}.

Consider the orbit morphism
\begin{equation}\label{morphism}
\varphi\colon G\to \overline{G \cdot  b},\quad
\varphi(g)=g \cdot  b.
\end{equation}
As the morphism $\iota$ is dominant, the image
of the morphism
\begin{equation}\label{psi}
\psi:=\varphi\circ\iota\colon \mathbb A^{r,
s}\to \overline{G \cdot b}
\end{equation}
is dense in $\overline{G \cdot  b}$; whence the
corresponding comorphism is an embedding of the
algebra of regular functions:
\begin{equation}\label{psi*}
%%\iota^*\circ
%%(\varphi\circ\iota)
\psi^*\colon k[\overline{G \cdot
b}]\hookrightarrow k[\mathbb A^{r, s}].
\end{equation}
It follows from \eqref{tt} that
\begin{equation}
%%\iota^*\circ\varphi
\psi^*(k[\overline{G \cdot
b}])=k[%%\varphi
\psi^*(t_1),\ldots,%%\varphi
\psi^*(t_n)],
\end{equation}
and \eqref{morphism}, \eqref{psi},
\eqref{actionn}, and the definition of $t_i$
imply that
\begin{equation}\label{ip}
%%\varphi
\psi^*(t_i)=\sum_{j=1}^n\beta_j\iota^*(\rho_{i,j}).
\end{equation}

There is only one point of $V$ where all the
functions $z_1-\alpha_1,\ldots, z_n-\alpha_n$
vanish, the point $a$. Taking into account that
the set $\overline{G \cdot b}$ is $G$-stable, we
deduce from this that the following properties
are equivalent:
\begin{equation}\label{equiv}
\left. \begin{array}{rl} {\rm({\rm c}_1)}&\hskip
-2mm\mbox{the orbit $G \cdot  a$ does not lie in
the closure
of the orbit  $G \cdot  b$};\\
{\rm({\rm c}_2)}&\hskip -2mm\mbox{the point $a$
does not lie in the closure of the orbit
$G \cdot  b$};\\
{\rm ({\rm c}_3)}&\hskip -2mm\mbox{there are no
points of $\overline{G \cdot b}$ where all the
functions}\\
{}&\hskip -2mm\mbox{ $t_1\!-\!\alpha_1,\ldots,
t_n\!-\!\alpha_n$ vanish}.
\end{array}
\right
%%\Biggl
\}
\end{equation}

2.\hskip 2mm Now we shall use the effective form
of Hilbert's Nulltellensatz obtained in
\cite{J05}. In order to formulate this result we
shall introduce some notation and definitions.

First, for every positive integers $d_1\geqslant
\cdots\geqslant d_m$ and  $q$, set

\begin{equation*}
N(d_1,\ldots, d_m; q)=\begin{cases}\prod_{i=1}^m
d_i\hskip 2mm
&\mbox{if $q\geqslant m\geqslant 1$},\\
\hskip -.5mm\bigl(\prod_{i=1}^{q-1}d_i\bigr)d_m
\hskip 2mm
&\mbox{if $m>q>1$},\\
d_m \hskip 2mm &\mbox{if $q=1$},
\end{cases}
\end{equation*}
and also
\begin{equation*}
N'(d_1,\ldots, d_m; q)=N(d_1,\ldots, d_m; q)
\hskip 1mm \mbox{if $q>1$},\hskip 3mm \mbox{and
}\hskip 1mm N'(d_1,\ldots, d_m; 1)=d_1.
\end{equation*}

Further, if
%%call the {\it degree} of
a
nonzero regular function
 $h$
on an irreducible closed subset
 $X$
of an affine space  $\mathbb A^l$ is given,  the
%% ${\rm
%%deg}\,h$ that is equal to the
minimum of degrees of polynomial functions on
$\mathbb A^l$ whose restriction to $X$ is $h$
will be called the {\it degree} of $h$ and
denoted by ${\rm deg}\,h$. It is easily seen
that, for every nonzero regular functions  $f$
and $h$ on $X$, the inequality ${\rm
deg}\,fh\leqslant {\rm deg}\,f+{\rm deg}\,h$
holds and, in general,
%%the equality does not
%%hold
it may be strict. However, if the set $X$ is
conic, then necessarily
\begin{equation}\label{=}
{\rm deg}\,fh={\rm deg}\,f+{\rm deg}\,h.
\end{equation}

\begin{theorem}[\rm Z. Jelonek \cite{J05}]
\label{Jel} Let $X$ be an irreducible closed
subset of  $\mathbb A^l$ of positive dimension.
Let $h_1,\ldots, h_m$ be the nonconstant regular
functions on $X$ such that
\begin{equation}\label{ineqv}
{\rm deg}\,h_1\geqslant\cdots\geqslant {\rm
deg}\,h_m.
\end{equation}
Then the following conditions are equivalent:
\begin{enumerate}
\item[\rm (a)] there are no points of~$\;X$
where all the functions $h_1,\ldots, h_m$
vanish; \item[\rm(b)] there are regular
functions $f_1,\ldots, f_m$ on $X$ such that
$$1=\sum_{i=1}^{m} f_ih_i$$ and, for every $i$,
the following inequality holds:
\begin{equation*}
{\rm deg}\,f_i h_i\leqslant\begin{cases} {\rm
deg}\,X\!\cdot\!N'({\rm deg}\,h_1,\ldots,{\rm
deg}\,h_m; \dim\,X),\hskip 2mm &\mbox{if
$\;m\leqslant
\dim\,X$},\\
2{\rm deg}\,X\!\cdot\!N'({\rm
deg}\,h_1,\ldots,{\rm deg}\,h_m;
\dim\,X)-1,\hskip 2mm &\mbox{if $\;m> \dim\,X$.}
\end{cases}
\end{equation*}
\end{enumerate}
\end{theorem}

Now we take as $\mathbb A^l$ and $X$
respectively $V$ and $\overline{G \cdot b}$. As
the nonconstant function $t_i$  is the
restriction to $\overline{G \cdot b}$ of a
linear function on $V$, we have
\begin{equation}\label{h}
{\rm deg}\,(t_1-\alpha_1)=\ldots={\rm
deg}\,(t_n-\alpha_n)=1.
\end{equation}
Further, in Theorem \ref{Jel} put $m=n$ and
$h_i=t_i-\alpha_i$, $i=1,\ldots, n$ (by
\eqref{h} condition \eqref{ineqv} is fulfilled).
Then it follows from \eqref{equiv}, Theorem
\ref{Jel}, and \eqref{dim}, \eqref{h} that every
property  $({\rm c}_1)$, $({\rm c}_2)$, $({\rm
c}_3)$ in \eqref{equiv} is equivalent to the
property
\begin{enumerate}
\item[$({\rm c}_4)$] there are
functions $f_1,\ldots, f_n$ regular on
$\overline{G \cdot  b}$ such that\\
$\sum_{i=1}^{m} (t_i-\alpha_i)f_i-1=0$ and, for
every $i$, the following inequality holds:
\begin{equation}\label{degg}
{\rm deg}\,(t_i-\alpha_i)f_i\leqslant 2\,{\rm
deg}\, \overline{G \cdot  b}-1.
\end{equation}
\end{enumerate}

As $\overline{G \cdot  b}$ is a conic
(irreducible) subvariety of $V$, it follows from
\eqref{=} and \eqref{h} that ${\rm
deg}\,(t_i-\alpha_i)f_i=1+{\rm deg}\,f_i$.
Therefore inequality \eqref{degg} is equivalent
to the inequality
\begin{equation}\label{deggg}
{\rm deg}\,f_i\leqslant 2\,{\rm deg}\,
\overline{G \cdot  b}-2.
\end{equation}

3.\hskip 2mm The degree of the variety
$\overline{G \cdot b}$ can be upper bounded.
Namely, the orbit ${G \cdot b}$ is the image of
the variety  $\rho (G)\subset {\rm End}(V)$
under the linear map  ${\rm End}(V)\!\to V$,
$g\mapsto g\cdot b$. But it is easy to prove
(see, e.g., \cite[Prop. 4.7.10]{DK}) that
degree does not increase under affine maps: if
$Y$ is a locally closed subset of $\mathbb A^l$
and $\varphi\colon \mathbb A^l\to\mathbb A^m$ is
an affine map, then ${\rm deg}\,Y\geqslant {\rm
deg}\,\overline{\varphi(Y)}$. Therefore ${\rm
deg}\, \overline{G \cdot  b}$ is not bigger than
the degree of the subvariety  $\rho(G)$ in ${\rm
End}(V)$, i.e., the number $d$.
 Hence, by virtue of
\eqref{deggg}, for every  $i$, the following
inequality holds
\begin{equation}\label{degggg}
{\rm deg}\,f_i\leqslant 2d-2.
\end{equation}

4.\hskip 2mm As the comorphism  $\psi^*$ (see
\eqref{psi*}) is an embedding, we obtain the
equiva\-lence
\begin{equation}\label{eqv}
\sum_{i=1}^{m} (t_i-\alpha_i)f_i-1=0\hskip 1.5mm
%%\mbox{и}
\iff\hskip 1.5mm  \sum_{i=1}^{m}
(\psi^*(t_i)-\alpha_i)\psi^*(f_i)-1=0.
\end{equation}

By virtue of \eqref{ip} and \eqref{eqv}, it
follows from inequality \eqref{degggg} and the
definitions of functions $t_i$, numbers ${\rm
deg}\,f_i$, the ``generic'' polynomials $F_p$
(see \eqref{f}), and the polynomial $H$ (see
\eqref{F}) that property $({\rm c}_4)$ is
equivalent to the following property:
\begin{enumerate}
\item[$({\rm c}_5)$] for every coefficient
$c_{p, q_1,\ldots, q_n}$ of every ``generic''
polynomial $F_p$, there is a constant $\nu_{p,
q_1,\ldots, q_n}\in k$ such that after
substitution of $\nu_{p, q_1,\ldots, q_n}$ in
place of $c_{p, q_1,\ldots, q_n}$ for every $p,
q_1,\ldots, q_n$, the right-hand side of formula
\eqref{ls} becomes zero of the field of rational
functions in $x_1,\ldots, x_{r+s}$ with the
coefficients in $k$:
\begin{equation}\label{lll}
\sum_{(i_1,\ldots, i_{r+s})\in M}
\ell_{i_1,\ldots, i_{r+s}}(\ldots,\nu_{p,
q_1,\ldots, q_n},\ldots)\, x^{i_1}_{1}\cdots
x^{i_{r+s}}_{r+s}=0.
\end{equation}
\end{enumerate}

It remains to notice that as monomials
$x^{i_1}_{1}\cdots x^{i_{r+s}}_{r+s}$, where
$(i_1,\ldots, i_{r+s})\in M$, are linearly
independent over $k$, equality \eqref{lll} is
equivalent to vanishing of all the coefficients
of the left-hand side,
$$
\ell_{i_1,\ldots, i_{r+s}}(\ldots,\nu_{p,
q_1,\ldots, q_n},\ldots)=0,
$$
i.e., to that  $c_{p, q_1,\ldots, q_n}=\nu_{p,
q_1,\ldots, q_n}$ is a solution of system of
linear equations \eqref{ls} in variables  $c_{p,
q_1,\ldots, q_n}$. This completes the proof of
the theorem.
\end{proof}

\begin{remark} The proof shows that
the claim of Theorem  \ref{thm} remains true if
the constant  $d$ in the definition of the
``generic'' polynomials $F_1,\ldots, F_n$ is
replaced by ${\rm deg}\,\overline{G\cdot b}={\rm
deg}\,G\cdot b$. If, from some reasons, the
number ${\rm deg}\,G\cdot b$ is known,  this
permits to decrease the number of variables and
equations in the system of linear equations
уравнений \eqref{ls}. In some cases the degrees
of orbits indeed have been computed.
\end{remark}

\begin{example} Consider the same
situation as in Examples \ref{sl2} and \ref{ex}. Take
a nonzero binary form $v\in V_h$ and decompose it as a
product $v=v_{1}^{n_1}\cdots v_{p}^{n_p}$, where
$v_1,\ldots, v_p$ are pairwize nonproportional forms
from  $V_1$. Assu\-me that  $p\geqslant 3$ and
$h/n_i\geqslant 2$ for every $i$. Then the
$G$-stabilizer $G_v$ of the form  $v$ is finite
 \cite{Po74} and  $|G_v|{\rm deg}\,G\cdot v
=-2(p-1)h^3-4\sum_{i=1}^p(h-n_i)^3+3h^2
\sum_{i=1}^p(h-n_i)+ 3h\sum_{i=1}^p(h-n_i)(h-2n_i)$
(see the proof in
 \cite[Sect.\,8]{MJ92}). In particular,
 if all the roots of the form
  $v$ are simple, i.e.,   $p=h$,
$n_1=\ldots=n_h=1$, then
\begin{equation}\label{dG}
|G_v|{\rm deg}(G\cdot v)=2h(h-1)(h-2).
\end{equation}
Formula \eqref{dG} can also be deduced from a
calculation made in  1897 by Enriques and Fano; this
has been done in 1983 by Mukai and Umemura
 (with a gap fixed in
 \cite[Sect.\,8, Remark]{MJ92} where one can
 find
 the relevant references).
\end{example}
\end{nothing*}

\section{\bf Defining the set \boldmath$
\overline{G\cdot L}$ by equations}

 \begin{nothing*} Let $L$ be a linear subvariety
 of $V$. Then there is a morphism
\begin{equation*}\label{t}
\tau\colon \mathbb A^l\to V,
\end{equation*}
whose image is dense in  $L$: for instance,
 one can take $\tau$ to be an affine embedding of
$\mathbb A^l$ into $V$ whose image is $L$. We
fix such a morphism  $\tau$. Besides, like above
we assume that a dominant morphism \eqref{iota}
is fixed.

We maintain the notation from Subsection
\ref{o}. Like in the proof of Theo\-rem
\ref{thm}, we denote by  $z_1,\ldots, z_n$ the
basis of $V^*$ dual to $e_1,\ldots, e_n$.
Besides, we denote by $y_1,\ldots, y_l$ the
standard coordinate functions on  $\mathbb A^l$:
\begin{equation*}
y_i(a)=\delta_i \quad \mbox{for
$a=(\delta_1,\ldots, \delta_l)\in\mathbb A^l$}.
\end{equation*}
Then
\begin{equation}\label{tau}
\tau(v)=\sum_{i=1}^{n}\tau^*(z_i)(v)e_i\quad
\mbox{for every $v\in\mathbb A^l$.}
\end{equation}
\end{nothing*}

\begin{nothing*}
The functions  $x_1,\ldots, x_{r+s}$,
$y_1,\ldots, y_l$ can be naturally extended to
the functi\-ons on $\mathbb A^{r, s}\times
\mathbb A^l$; we denote these extensions by the
same letters. Consider the morphism
\begin{equation}\label{muuu}
\mathbb \mu\colon A^{r, s}\times \mathbb A^l\to
V,\quad \mu(u, v)=\iota(u)\cdot\tau(v)
\end{equation}
Then \eqref{actionn} and \eqref{tau} imply that
\begin{equation}\label{m*}
f_p:=\mu^*(z_p)=\sum_{q=1}^{n}\iota^*(\rho_{pq})
\tau^*(z_q),\qquad 1\leqslant p, q\leqslant n.
\end{equation}
 \end{nothing*}

We identify $\mathbb A^{r, s}\times \mathbb A^l$
with the open subset of  $\mathbb A^{r+s+l}$ by
means of the embedding
$$\mathbb A^{r, s}\times \mathbb A^l\hookrightarrow
\mathbb A^{r+s+l},\quad
((\varepsilon_1\ldots,\varepsilon_{r+s}),
(\delta_1,\ldots,\delta_l))\mapsto
(\varepsilon_1\ldots,\varepsilon_{r+s},
\delta_1,\ldots,\delta_l);
$$
%%and naturally extend
then %%the functions
$x_1,\ldots, x_{r+s}$, $y_1,\ldots, y_l$ become the
standard coordinate functions on $\mathbb A^{r+s+l}$.
%%maintaining the notation.
Besides, we identify  $V$
with $\mathbb A^n$ by means of the isomorphism
$$
V\to \mathbb A^n,\qquad \sum_{i=1}^n
\gamma_ie_i\mapsto (\gamma_1,\ldots,\gamma_n).
$$
Then morphism \eqref{muuu} becomes the rational
map $\varrho$ of the affine space $\mathbb
A^{r+s+l}$ to the affine space $\mathbb A^n$:
$$\varrho\colon \mathbb A^{r+s+l}
\dashrightarrow \mathbb A^n,\quad a\mapsto
(f_1(a),\ldots,f_n(a)).$$  As $\overline{\iota
(\mathbb A^{r,s})\cdot L}=\overline{G\cdot L}$,
we have the equality
\begin{equation}\label{GL}
\overline{\varrho(\mathbb
A^{r+s+l})}=\overline{G\cdot L}.
\end{equation}

\noindent This makes it possible to apply eliminaton
theory to finding the equations that cut out
$\overline{G\cdot L}$ in $V$. An algorithmic solution
to this problem is obtained by means of Gr\"obner
bases as follows.

\begin{nothing*} {\it The input of the algorithm}

\vskip 1mm

We assume that the following data are known:
\begin{list}{%%$\bullet$
}%%{enumerate}
{\labelsep=2mm\leftmargin=8.5mm
\itemindent=-1mm\labelwidth=1mm\parsep=1mm}
 \item[---] The functions
 \begin{equation}\label{irpq}
 \iota^*(\rho_{p, q})\!\in\!
 k[x_1,\ldots, x_{r+s},
x_1^{-1},\ldots,x_{r}^{-1}]\subset k(\mathbb
A^{r+s+l}), \qquad 1\leqslant p, q\leqslant n.
\end{equation}
\item[---] The functions
%%, задающие морфизм $\tau$,
\begin{equation}\label{tz}
\tau^*(z_i)\in k[y_1,\ldots, y_l]\subset
k[\mathbb A^{r+s+l}],\qquad 1\leqslant
i\leqslant n.
\end{equation}
\end{list}%%{enumerate}

\begin{example} Fix a point
$v\in L$ and a sequence  $f_1,\ldots, f_m$ of
linear indepen\-dent vectors defining a
parametric presentation
$L=\{v+\sum_{i=1}^m\lambda_if_i\mid
\lambda_d,\ldots,\lambda_m\break \in k\}$. Take
$\tau$ to be the embedding  $ \tau\colon \mathbb
A^m\hookrightarrow \mathbb A^n$,
$\tau(\lambda_1,\ldots,\lambda_m)=
v+\sum_{i=m}^l\lambda_if_i. $ Let
$v=\sum_{j=1}^n\gamma_je_j$ and $f_i=
\sum_{j=1}^n\nu_{ji}e_j$. Then $$
\tau^*(z_i)=\sum_{i=1}^n
\nu_{ij}y_j+\gamma_i,\quad 1\leqslant i\leqslant
n.
$$  See Example \ref{ex} regarding the functions
$\iota^*(\rho_{p, q})$.
\end{example}
\end{nothing*}
\begin{nothing*} {\it The algorithm}

\vskip 1mm

The following sequence of steps together with
Theorem \ref{thmm} provide a const\-ruc\-ti\-ve
method to obtain the equations defining
$\overline{G\cdot L}$ in $V$:

\begin{list}{}
{\labelsep=2mm\leftmargin=10mm
\itemindent=-1mm\labelwidth=4mm\parsep=1mm}
\item[(1)] Compute the rational functions $f_p$
using formula \eqref{m*} and write down each of
them as a fraction of polynomials:
$$
f_p=\frac{g_p}{h_p},\quad \mbox{где\hskip 3mm
$g_p\in k[x_1,\ldots, x_{r+s}, y_1,\ldots,
y_t]$,\; $h_p\in k[x_1,\ldots, x_r]$}
$$
(see \eqref{irpq} and \eqref{tz}). \item[(2)]
Consider the polynomial ring  $k[t,
x_1,\ldots,x_{r+s}, y_1,\ldots, y_t, z_1,\ldots,
z_n]$, whe\-re $t$ is a new variable, and find
for its ideal generated by the polynomials
\begin{equation*}
h_1z_1-g_1,\ldots, h_nz_n-g_n, 1-h_1\cdots h_nt.
\end{equation*}
a Gr\"obner basis with respect to an order of
monomials such that every variable   $t$, $x_i$
и $y_j$ is bigger than every variable  $z_p$.
\end{list}
\begin{theorem}\label{thmm}
Let $q_1,\ldots, q_m$ be all the elements of
this Gr\"obner basis that lie in  $k[z_1,\ldots,
z_n]$. Then
\begin{equation*}
\overline{G\cdot L}=\{v\in \mathbb A^n\mid
q_1(v)=\ldots=q_m(v) =0\}.
\end{equation*}
\end{theorem}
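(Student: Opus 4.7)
The plan is to interpret Theorem \ref{thmm} as a standard Gr\"obner basis implicitization of the rational parametrization $\varrho\colon \mathbb A^{r+s+l}\dashrightarrow \mathbb A^n$, $a\mapsto (f_1(a),\ldots,f_n(a))$. In view of equality \eqref{GL}, it suffices to show
\[
V(q_1,\ldots,q_m)=\overline{\varrho(\mathbb A^{r+s+l})}=\overline{G\cdot L}.
\]

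First I would handle the denominators $h_p$ by the Rabinowitsch trick. Let
\[
I:=\langle h_1z_1-g_1,\,\ldots,\,h_nz_n-g_n,\,1-h_1\cdots h_n t\rangle
\]
in $R:=k[t,x_1,\ldots,x_{r+s},y_1,\ldots,y_l,z_1,\ldots,z_n]$, and let $U\subset \mathbb A^{r+s+l}$ be the principal open set $\{h_1\cdots h_n\neq 0\}$. On $V(I)\subset \mathbb A^{1+r+s+l+n}$, the last generator forces $h_1\cdots h_n\neq 0$, and then each equation $h_pz_p-g_p=0$ becomes $z_p=g_p/h_p=f_p$. Hence $V(I)$ is precisely the graph of the regular morphism
\[
U\longrightarrow \mathbb A^{1+n},\qquad (x,y)\longmapsto\bigl(\tfrac{1}{h_1\cdots h_n}(x,y),\,\varrho(x,y)\bigr),
\]
so the projection $\pi_z\colon V(I)\to \mathbb A^n_z$ that drops $t,x,y$ has image exactly $\varrho(U)$. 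Because $U$ is a nonempty open subset of the irreducible variety $\mathbb A^{r+s+l}$, it is dense in it, and $\varrho$ is regular on $U$; consequently $\overline{\varrho(U)}=\overline{\varrho(\mathbb A^{r+s+l})}=\overline{G\cdot L}$ by \eqref{GL}.

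Next I would invoke two standard results for Gr\"obner bases with respect to an elimination order in which $t,x_i,y_j$ are larger than every $z_p$; both hold because $k$ is algebraically closed (see \cite[Chap.\,3]{CLO98}). The elimination theorem asserts that the elements of the computed Gr\"obner basis that lie in $k[z_1,\ldots,z_n]$ form a Gr\"obner basis of the elimination ideal $I\cap k[z_1,\ldots,z_n]$, so $\langle q_1,\ldots,q_m\rangle=I\cap k[z_1,\ldots,z_n]$. The closure theorem asserts that $V(I\cap k[z_1,\ldots,z_n])=\overline{\pi_z(V(I))}$. Combining these two equalities with the preceding paragraph yields
\[
V(q_1,\ldots,q_m)=\overline{\pi_z(V(I))}=\overline{\varrho(U)}=\overline{G\cdot L},
\]
which is what the theorem claims.

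The only nonroutine point is the verification carried out in the second paragraph: that the Rabinowitsch localization via the extra variable $t$ cuts out exactly the graph of $\varrho$ over its natural domain $U$, neither losing genuine image points nor acquiring spurious components on the polar divisor $\{h_1\cdots h_n=0\}$. Once this is in place, the elimination and closure theorems furnish the rest of the argument mechanically.
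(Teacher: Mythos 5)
Your proposal is correct and takes essentially the same route as the paper: the paper's proof simply combines \eqref{GL} with the rational implicitization theorem of \cite[Chap.\,3, \S\,3, Theorem 2]{CLO98}, while you unwind that citation explicitly (the Rabinowitsch variable $t$ cutting out the graph of $\varrho$ over $\{h_1\cdots h_n\neq 0\}$, then the Elimination and Closure Theorems over the algebraically closed field $k$). There is no gap.
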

\begin{proof} We have $\overline{\varrho
(\mathbb A^{r+s+l})}\!=\!\{v\!\in\!\mathbb
A^n\mid q_1(v)\!=\!\ldots\!=\!q_m(v)
\!=\!0\}$---this is a general fact about the
closure of image of every rational map of one
affine space to another, see,
e.g.,\;\cite[Chap.\,3, \S\,3, Theorem 2]{CLO98}.
Now the claim that we wish to prove  follows
from equality~\eqref{GL}.
\end{proof}
\begin{remark}
Although the elements $q_1,\ldots, q_m$, interesting
for us, constitute a part of the Gr\"obner basis,  for
finding them by means of the described algorithm, we
have to find the whole of this basis.
\end{remark}
\end{nothing*}

%%\vskip 12mm

%%\

\end{document}